\newtheorem{theorem}{Theorem}    
\newtheorem{corollary}{Corollary}
\theoremstyle{definition}
\newtheorem*{remark*}{Remark}
\newcommand{\Z}{\mathbb{Z}}
\newcommand{\R}{\mathbb{R}}
\newcommand{\Q}{\mathbb{Q}}
\DeclareMathOperator{\supp}{supp}
\DeclareMathOperator{\FDTC}{FDTC}
\title{On a structure of random open books and closed braids}
\author{Tetsuya Ito}
\address{Research Institute for Mathematical Sciences, Kyoto university
Kyoto, 606-8502, Japan}
\email{tetitoh@kurims.kyoto-u.ac.jp}
\subjclass[2010]{Primary~57M27 
, Secondary~57M25}
\urladdr{http://www.kurims.kyoto-u.ac.jp/~tetitoh/}
\keywords{Random braid, Random open book, Fractional Dehn twist coefficients, quasimorphism}
\thanks{T.I. was partially supported by JSPS Grant-in-Aid for Young Scientists (B) 15K17540.}
\begin{document}

\begin{abstract}
A result of Malyutin shows that a random walk on the mapping class group gives rise to an element whose fractional Dehn twist coefficient is large or small enough. We show that this leads to several properties of random 3-manifolds and links. For example, random closed braids and open books are hyperbolic. 
\end{abstract}

\maketitle

Let $G$ be the mapping class group or the braid group of an oriented compact surface $S$ with connected boundary. 
The \emph{Fractional Dehn Twist Coefficients} (\emph{FDTC}, in short) is a quasimorphism $\FDTC: G \rightarrow \Q$ which plays an important role in contact 3-manifolds and open book decompositions \cite{hkm1}. When one studies (contact) 3-manifolds by using open book decompositions, the FDTC often appears as a primary assumption of various theorems. Here a \emph{quasimorphism} on a group $G$ is a map $\rho:G \rightarrow \R$ which has the property $\sup_{g,h \in G} |\rho(gh)-\rho(g)-\rho(h)| \leq \infty$.

The aim of this paper is to point out observations that the results of a random walk on $G$, combined with the results on open book decompositions and closed braids, lead to various interesting and non-trivial properties on random open books and closed braids. Although these results are direct consequences of known results and their proofs, they illustrate a general picture of random 3-manifolds and links.

Let $\mu$ be a probability measure on $G$ with finite support, and let $H_{\mu}$ be the sub semi-group of $G$ generated by $\supp(\mu)$, the support of $\mu$. Consider the simple random walk with respect to $\mu$ starting from $h \in G$. Let $g_{k}$ be the random variable representing the position of the random walk at time $k$. The theory of random walks tells that $g_{k}$ has the following properties.

\begin{enumerate}
\item If $\mu$ is non-elementary, which we mean that $H_{\mu}$ contains pseudo-Anosov elements with distinct fixed points on the Thurston boundary of Teichm\"uller space, then the probability that $g_{k}$ is pseudo-Anosov goes to one as $k \to \infty$ \cite{mah},\cite[Corollary 0.6]{mal}.
\item For a quasimorphism $\rho: G \rightarrow \R$, if $\mu$ is unbounded with respect to $\rho$, which we mean that $\rho(H_{\mu}) \subset \R$ is unbounded, then for any $C \geq 0$, the probability that $|\rho (g_{k})| \leq C$ goes to zero as $k \to \infty$ \cite[Corollary 0.2]{mal}.
\end{enumerate}

In the rest of the paper, except Theorem \ref{theorem:5}, we always assume that a probability measure $\mu$ is non-elementary and unbounded with respect to the FDTC quasimorphism. Thus, a random element $g_{k}$ is pseudo-Anosov with sufficiently large or small FDTC, with asymptotic probability one.

The first result justifies our naive expectation for ``generic'' 3-manifolds --  one can expect a random 3-manifold admits various nice structures.
For an open book $(S,g)$ we denote the corresponding 3-manifold by $M_{(S,g)}$.

\begin{theorem} 
\label{theorem:random3m}
As $k \to \infty$, the probability that an open book $(S,g_{k})$ has the following properties goes to one. 
\begin{enumerate}
\item[(a)] $M_{(S, g_{k})}$ is hyperbolic. (In particular, $M_{(S, g_{k})}$ is irreducible and atoroidal.)
\item[(b)] For a fixed $C>0$, $M_{(S,g_{k})}$ contains no incompressible surface of genus less than $C$.
\item[(c)] Either $(S,g_{k})$ or $(S,g_{k}^{-1})$ supports a weakly symplectically fillable and universally tight contact structure, which is a perturbation of a co-oriented taut foliation. (In particular, $M_{(S,g_{k})}$ admits a co-oriented taut foliation).
\item[(d)] $M_{(S,g_{k})}$ is not a Heegaard-Floer L-space.
\end{enumerate}
\end{theorem}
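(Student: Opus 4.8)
The plan is to combine the two random-walk inputs recorded above with deterministic theorems of the shape ``pseudo-Anosov monodromy together with large $|\FDTC|$ forces the stated property.'' First I would record the probabilistic reduction. The first of the two random-walk properties stated above shows that the event that $g_{k}$ is pseudo-Anosov has probability tending to $1$, while the second, applied to $\rho=\FDTC$ (legitimate because $\mu$ is assumed unbounded with respect to $\FDTC$), shows that for every fixed $C\geq 0$ the event $|\FDTC(g_{k})|>C$ also has probability tending to $1$. Since the intersection of two events of probability tending to $1$ again has probability tending to $1$, for each fixed $C$ the open book $(S,g_{k})$ has pseudo-Anosov monodromy with $|\FDTC(g_{k})|>C$ with asymptotic probability one. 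It therefore suffices to prove each of (a)--(d) for a deterministic open book $(S,\phi)$ with $\phi$ pseudo-Anosov and $|\FDTC(\phi)|$ exceeding a constant depending only on $S$, and then to take $C$ to be the largest of the four resulting constants.

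Next I would treat (b), which I view as the geometric core. The binding complement of $(S,\phi)$ is the mapping torus of the restriction of $\phi$ to the interior of $S$; this is hyperbolic exactly because $\phi$ is pseudo-Anosov, and $M_{(S,\phi)}$ is a Dehn filling of it along the page slope. Adding boundary Dehn twists changes $\FDTC$ by integers while leaving the binding complement unchanged, and correspondingly moves the filling slope, so large $|\FDTC|$ should be read as a filling slope far out in the cusp. I would invoke the known bound that the existence of a closed incompressible surface of genus $g$ in $M_{(S,\phi)}$ forces $|\FDTC(\phi)|$ to be bounded above in terms of $g$; contrapositively, once $|\FDTC(\phi)|$ is large enough, every closed incompressible surface has genus at least $C$. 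Statement (a) then follows from (b): choosing the threshold so that there is no incompressible surface of genus $0$ or $1$ rules out essential spheres and tori, so $M_{(S,\phi)}$ is irreducible and atoroidal, and geometrization leaves only the hyperbolic and small Seifert-fibered cases. The Seifert-fibered case is excluded because large $|\FDTC|$ keeps the filling slope away from the finitely many exceptional (non-hyperbolic) slopes of the hyperbolic binding complement, leaving $M_{(S,\phi)}$ hyperbolic.

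For (c) I would apply Roberts' construction of taut foliations in punctured surface bundles: when $\phi$ is pseudo-Anosov and the filling slope lies outside a bounded set---equivalently $|\FDTC(\phi)|$ is large, with the sign of $\FDTC(\phi)$ selecting $\phi$ or $\phi^{-1}$---the manifold carries a co-oriented taut foliation. Perturbing it via the Eliashberg--Thurston confoliation theorem, together with the Kazez--Roberts approximation of $C^{0}$ foliations, yields a contact structure which, by the standard properties of such perturbations, is weakly symplectically fillable and universally tight; this is exactly the structure described in (c). Statement (d) is then immediate from (c), since a closed $3$-manifold admitting a co-oriented taut foliation is never a Heegaard--Floer $L$-space, by Ozsv\'ath--Szab\'o as sharpened by Bowden and by Kazez--Roberts.

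The step I expect to be the main obstacle is uniformity of the thresholds. A random walk visits infinitely many distinct pseudo-Anosov conjugacy classes, so I need each deterministic input---the genus bound in (b), the count of exceptional slopes used in (a), and the range of foliation-carrying slopes in (c)---to take effect once $|\FDTC|$ exceeds a constant depending only on $S$, and not on the individual class. Thurston's hyperbolic Dehn surgery theorem on its own only furnishes a class-dependent finite set of bad slopes, so the genuine content is that the $\FDTC$-indexed forms of these results hold with an $S$-uniform threshold. Confirming that the cited theorems are uniform in this sense is precisely what makes the passage from the deterministic statements to the probabilistic conclusion legitimate.
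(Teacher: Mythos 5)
Your probabilistic skeleton is exactly the paper's: intersect the two asymptotic-probability-one events (pseudo-Anosov, and $|\FDTC|>C$ for any fixed $C$, the latter from Malyutin's quasimorphism result applied to $\FDTC$), then feed a deterministic theorem of the form ``pseudo-Anosov plus $|\FDTC|$ large implies the property.'' For (b) and (d) your inputs coincide with the paper's (the genus bound on incompressible surfaces in terms of $|\FDTC|$ is \cite[Theorem 7.2]{ik}, and the L-space obstruction is Ozsv\'ath--Szab\'o). The differences, and the places where your write-up is not yet a proof, are in (a) and (c).

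For (a), you correctly identify the crux --- that Thurston's hyperbolic Dehn surgery theorem only excludes a \emph{class-dependent} finite set of bad filling slopes, whereas the random walk ranges over infinitely many pseudo-Anosov conjugacy classes --- but you leave it at ``confirming that the cited theorems are uniform is what makes the passage legitimate.'' That confirmation is the entire content of the step: it is not obtainable from the Dehn-filling picture you sketch, and the paper closes it by citing \cite[Theorem 8.3]{ik}, which gives the explicit class-independent threshold $|\FDTC(g)|>1$ (proved by open book foliation techniques rather than by counting exceptional slopes). Without that input, or an equivalent uniform statement, your argument for (a) does not go through. For (c), your route (Roberts' taut foliations plus Eliashberg--Thurston perturbation) produces \emph{some} weakly fillable contact structure on the manifold, but statement (c) asserts that the contact structure \emph{supported by the open book} $(S,g_{k})$ or $(S,g_{k}^{-1})$ has these properties; identifying the perturbed foliation's contact structure with the open-book-supported one is precisely the content of \cite[Theorem 1.2]{hkm2} (again with a uniform threshold, $\FDTC>1$), and is not supplied by the confoliation machinery alone. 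So the architecture matches the paper, but both (a) and (c) still rest on uniform deterministic theorems that you have named as desiderata rather than established or cited.
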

\begin{proof}
(a) follows from \cite[Theorem 8.3]{ik}: $M_{(S,g)}$ is hyperbolic if $g$ is pseudo-Anosov with $|\text{FDTC}(g)|>1$. (b) follows from \cite[Theorem 7.2]{ik}: an existence of incompressible surface of genus $C>0$ implies $|\text{FDTC}(g)| \leq C$.
(c) follows from \cite[Theorem 1.2]{hkm2}: $(S,g)$ supports a desired contact structure if $g$ is pseudo-Anosov with $\text{FDTC}(g)>1$. (c) and \cite[Theorem 1.4]{os}, that asserts that an L-space does not admit a co-oriented taut foliation, prove (d).
\end{proof}

Note that when we take the starting point $h$ and the measure $\mu$ so that $M_{(S,h)}$ is an integral homology sphere and that $\supp(\mu)$ is contained in the Torelli group, then $M_{(S,g_{k})}$ is always an integral homology sphere. Thus the above theorem says that a random integral homology sphere also has various properties as we listed in Theorem \ref{theorem:random3m}. 
Since the fundamental group of integral homology sphere admitting co-oriented taut foliation is left-ordrdable \cite{cd}, we have the following result, which gives a supporting evidence of \cite[Conjecture 1]{bgw}. 
  
\begin{corollary} 
As $k \to \infty$, the probability that a random open book representing an integeral homology sphere $M_{(S,g_{k})}$ has the following properties goes to one.
\begin{enumerate}
\item $M_{(S, g_{k})}$ is not an L-space.
\item $M_{(S, g_{k})}$ admits a co-oriented taut foliation.
\item $\pi_{1}(M_{(S,g_{k})})$ is left-orderable.
\end{enumerate}
\end{corollary}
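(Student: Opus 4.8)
The plan is to derive the corollary as an almost immediate consequence of Theorem \ref{theorem:random3m} together with the left-orderability result cited from \cite{cd}. The key observation, already recorded in the remark preceding the corollary, is that under the stated hypotheses on the starting point $h$ and on $\supp(\mu)$ — namely that $M_{(S,h)}$ is an integral homology sphere and $\supp(\mu)$ lies in the Torelli group — the manifold $M_{(S,g_k)}$ is an integral homology sphere for \emph{every} $k$, not just with high probability. So the entire argument takes place within the integral homology sphere setting, and I only need to transfer the three conclusions of Theorem \ref{theorem:random3m} that are relevant here.

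First I would establish conclusion (1): since $M_{(S,g_k)}$ is an integral homology sphere for all $k$, the statement that it is not an L-space with asymptotic probability one is exactly part (d) of Theorem \ref{theorem:random3m}, so no additional work is needed. Next, for conclusion (2), I would invoke part (c) of Theorem \ref{theorem:random3m}, which gives that $M_{(S,g_k)}$ admits a co-oriented taut foliation with probability tending to one; again this requires nothing beyond restating the earlier result under the homology sphere hypothesis. The only genuinely new ingredient is conclusion (3): here I would combine (2) with the theorem of Calegari--Dunfield \cite{cd} that the fundamental group of an integral homology sphere admitting a co-oriented taut foliation is left-orderable. Because $M_{(S,g_k)}$ is guaranteed to be an integral homology sphere, the Calegari--Dunfield hypothesis is met precisely on the event where $M_{(S,g_k)}$ carries a co-oriented taut foliation, so left-orderability holds on (at least) that same event. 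Since that event has probability tending to one by (2), conclusion (3) follows.

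The structure of the proof is therefore a chain of implications on a single high-probability event: with asymptotic probability one, $g_k$ is pseudo-Anosov with $|\FDTC(g_k)|$ large, hence (by Theorem \ref{theorem:random3m}) $M_{(S,g_k)}$ is not an L-space and admits a co-oriented taut foliation, hence (by \cite{cd}, using that $M_{(S,g_k)}$ is an integral homology sphere) $\pi_1(M_{(S,g_k)})$ is left-orderable. I would phrase the probabilistic bookkeeping by noting that each of the three events contains, up to the deterministic homology-sphere condition, the event appearing in Theorem \ref{theorem:random3m}, whose probability converges to one; a finite intersection of events each of probability tending to one again has probability tending to one.

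I do not anticipate a serious obstacle, since this corollary is essentially a specialization of the main theorem to the Torelli/homology-sphere setting followed by one citation. The only point requiring minor care is to confirm that the Calegari--Dunfield theorem applies verbatim — that is, that its hypotheses (integral homology sphere, co-oriented taut foliation) are exactly the two properties we have in hand — and that the orientation subtlety in part (c) of Theorem \ref{theorem:random3m} (where the foliation may live on $(S,g_k)$ or on $(S,g_k^{-1})$) does not affect left-orderability, since $M_{(S,g_k)}$ and $M_{(S,g_k^{-1})}$ are orientation-reversing homeomorphic and left-orderability of $\pi_1$ is insensitive to orientation reversal.
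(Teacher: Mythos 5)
Your proposal is correct and follows essentially the same route as the paper: the paper derives the corollary directly from the remark that the Torelli/homology-sphere hypotheses make $M_{(S,g_k)}$ an integral homology sphere for all $k$, from parts (c) and (d) of Theorem \ref{theorem:random3m}, and from the Calegari--Dunfield theorem \cite{cd} for left-orderability. Your additional remarks on the probabilistic bookkeeping and the orientation issue in part (c) are sound but do not change the argument.
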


Next we study a random link in 3-manifold. Fix a 3-manifold $M$ and its open book decomposition $(S,\phi)$.
Then the closure of $g \in B_{n}(S)$, an $n$-strand braid of $S$, yields an oriented link in $M_{(S,\phi)}$ which we denote by $\widehat{g}$.

We regard $g$ and $\phi$ as an element of $MCG(S-\{n \text{ points}\})$ and consider their product $g \phi$. The FDTC of a (closed) braid $\widehat{g}$ is defined as the FDTC of $g \phi$, viewed as an element of $MCG(S-\{n \text{ points}\})$.  Here we remark that $g \phi$ is nothing but the monodromy of the  fibration $M_{(S,\phi)}\setminus (B \cup \widehat{g}) \rightarrow S^{1}$ that comes from the open book fibration $M_{(S,\phi)}\setminus B \rightarrow S^{1}$, where $B$ denotes the binding. (See \cite[Section 4]{ik} for details). 

The first part of the next result generalizes \cite{ma}.

\begin{theorem}
As $k \to \infty$, the probability that $\widehat{g_{k}}$ is a hyperbolic link in $M_{(S,\phi)}$ goes to one as $k \to \infty$.

Moreover, if $\widehat{g_{k}}$ is null-homologous (for example, when $M_{(S,\phi)}$ is an integral homology sphere), then for any fixed constant $C>0$, the probability that $g(\widehat{g_{k}}) \leq C$ goes to zero as $k \to \infty$. Here $g(\widehat{g_{k}})$ denotes the genus of $\widehat{g_k}$.
\end{theorem}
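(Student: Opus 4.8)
The plan is to transport the two random-walk inputs through the identity $\FDTC(\widehat{g})=\FDTC(g\phi)$ and then feed the resulting element $g_{k}\phi\in MCG(S\setminus\{n\text{ points}\})$ into the closed-braid analogues of the criteria already used in Theorem~\ref{theorem:random3m}. First I would record that the map $\rho\colon B_{n}(S)\to\Q$, $\rho(g)=\FDTC(g\phi)$, is a quasimorphism: it is the composition of the FDTC quasimorphism on $MCG(S\setminus\{n\text{ points}\})$ with right translation by the fixed element $\phi$, and right translation alters a quasimorphism only by a bounded amount. Under the standing assumption that $\mu$ is non-elementary and unbounded with respect to this $\rho$, the random-walk properties (1) and (2) stated above give, with asymptotic probability one, that $g_{k}\phi$ is pseudo-Anosov and that $|\FDTC(g_{k}\phi)|>C$ for any prescribed $C>0$.

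For hyperbolicity I would use the fibered description of the complement. As recalled in the text, $g_{k}\phi$ is the monodromy of the fibration $M_{(S,\phi)}\setminus(B\cup\widehat{g_{k}})\to S^{1}$ with fibre $S\setminus\{n\text{ points}\}$, so $M_{(S,\phi)}\setminus(B\cup\widehat{g_{k}})$ is the mapping torus of $g_{k}\phi$, and $M_{(S,\phi)}\setminus\widehat{g_{k}}$ is obtained from it by Dehn filling the cusps lying over $\partial S$ so as to restore the binding $B$. Applying the closed-braid counterpart of \cite[Theorem~8.3]{ik}, namely the same braid-foliation mechanism that controls this binding filling through the fractional Dehn twist coefficient, whenever $g_{k}\phi$ is pseudo-Anosov with $|\FDTC(g_{k}\phi)|>1$ the complement carries a complete hyperbolic structure, so $\widehat{g_{k}}$ is a hyperbolic link. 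Combining this with the previous paragraph gives probability tending to one, which specialises to \cite{ma} in the classical braid case.

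For the genus statement I would argue by contraposition through a genus-versus-FDTC bound. Assume $\widehat{g_{k}}$ is null-homologous and let $F$ be a minimal-genus Seifert surface; after an isotopy $F$ is essential in $M_{(S,\phi)}\setminus\widehat{g_{k}}$. The Seifert-surface counterpart of \cite[Theorem~7.2]{ik} then bounds the coefficient linearly in the genus, $|\FDTC(g_{k}\phi)|\le g(\widehat{g_{k}})+c$, where $c$ depends only on the fixed data $(S,\phi,n)$ through the bounded number of components of $\widehat{g_{k}}$. Hence $\{g(\widehat{g_{k}})\le C\}\subset\{|\FDTC(g_{k}\phi)|\le C+c\}$, and random-walk property (2) forces the probability of the right-hand event to zero; therefore $\mathbb{P}\!\left(g(\widehat{g_{k}})\le C\right)\to 0$.

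The main obstacle is the genus bound of the third paragraph: \cite[Theorem~7.2]{ik} as quoted concerns closed incompressible surfaces in the closed manifold $M_{(S,g)}$, whereas a Seifert surface has boundary on $\widehat{g_{k}}$ and cannot be capped off along longitudes. The work is therefore to run the braid-foliation and Euler-characteristic estimate of \cite{ik} for a properly embedded essential surface, controlling boundary-compressions and absorbing the contribution of the uniformly bounded number of boundary circles into the additive constant $c$. A subordinate point to check is that the non-elementarity hypothesis is correctly placed in $MCG(S\setminus\{n\text{ points}\})$ and that being pseudo-Anosov there is preserved under right multiplication by the fixed $\phi$, so that $g_{k}\phi$, and not merely $g_{k}$, is pseudo-Anosov with asymptotic probability one.
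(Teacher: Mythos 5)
Your argument follows the paper's proof essentially verbatim: the paper likewise reduces both claims to the facts that $\widehat{g_k}$ is hyperbolic when $g_k\phi$ is pseudo-Anosov with $|\FDTC(g_k\phi)|>1$ and that $|\FDTC(g_k\phi)|$ bounds $g(\widehat{g_k})$ from below, citing \cite[Theorem 8.4, Corollary 7.13]{ik}. The ``main obstacle'' you flag --- extending the genus-versus-FDTC bound from closed incompressible surfaces to Seifert surfaces of null-homologous closed braids --- is exactly the content of \cite[Corollary 7.13]{ik}, so no new work is needed there.
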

\begin{proof}
This follows from \cite[Theorem 8.4, Corollary 7.13]{ik}: $\widehat{g_{k}}$ is hyperbolic if $g_{k}\phi$ is pseudo-Anosov with $|\text{FDTC}(g_{k}\phi)|>1$, and that $|\FDTC(g_{k}\phi)|$ gives an lower bound of $g(\widehat{g_{k}})$.
\end{proof}

Then we analyse more precise structures on a random usual closed braid in $S^{3}$. 
First we point out a surprising consequence.

\begin{theorem}
\label{cor:3}

The probability that two random braids $\alpha_{k}, \beta_{l} \in B_{n}$ are non-conjugate but represent the same link goes to zero as $k,l \to \infty$. In other words, the probability that $\alpha_k$ and $\beta_l$ are conjugate, given that they represent the same link, tends to one as $k, l \to \infty$.
\end{theorem}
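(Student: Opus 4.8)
The plan is to separate a deterministic rigidity statement from the probabilistic estimate, and to deduce the latter from random-walk property (2) by a union bound. Note first that $\FDTC$, being a \emph{homogeneous} quasimorphism, is invariant under conjugation, so $|\FDTC|$ is a conjugacy invariant of $B_{n}$. The deterministic input I would isolate is the following: there is a constant $C$ (one expects $C=1$ is enough) such that whenever $\alpha,\beta\in B_{n}$ satisfy $|\FDTC(\alpha)|>C$, $|\FDTC(\beta)|>C$ and $\widehat{\alpha}=\widehat{\beta}$ as links in $S^{3}$, the braids $\alpha$ and $\beta$ are conjugate in $B_{n}$. Granting this, the theorem follows at once: the event ``$\alpha_{k}\not\sim\beta_{l}$ and $\widehat{\alpha_{k}}=\widehat{\beta_{l}}$'' is contained in $\{|\FDTC(\alpha_{k})|\le C\}\cup\{|\FDTC(\beta_{l})|\le C\}$, so its probability is at most $P(|\FDTC(\alpha_{k})|\le C)+P(|\FDTC(\beta_{l})|\le C)$, and each summand tends to $0$ as $k,l\to\infty$ because $\mu$ is unbounded with respect to the FDTC \cite{mal}.

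To establish the rigidity lemma I would start from Markov's theorem: braids with the same closure in $S^{3}$ are related by a finite sequence of conjugations and (de)stabilizations. I would then use that a large value of $|\FDTC|$ obstructs destabilization (in the spirit of the FDTC estimates of \cite{ik} and Malyutin's twist-number inequalities), so that $\alpha$ and $\beta$ both realise the braid index of $L=\widehat{\alpha}$. Birman--Menasco's Markov Theorem Without Stabilization then relates two minimal-index representatives of $L$ by a sequence of exchange moves and conjugations, and a further FDTC obstruction -- that a braid with $|\FDTC|$ large carries no nontrivial exchange move -- would force the connecting sequence to consist of conjugations alone, giving $\alpha\sim\beta$.

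The hard part is exactly this deterministic lemma, and inside it the control of the \emph{intermediate} braids in the Markov reduction: a priori the FDTC can drop below the threshold along the sequence and open up an exchange move, so I would need either a form of the rigidity statement phrased directly in terms of the FDTC of the two endpoints (bypassing intermediate complexity) or a quantitative bound on the change of $\FDTC$ under a single exchange move, combined with a proof that no such move exists once $|\FDTC|>C$. A second, softer point concerns the ``in other words'' reformulation: since $P(\alpha_{k}\sim\beta_{l})$ itself presumably tends to $0$, the conditional probability $P(\text{conjugate}\mid\text{same link})$ is a quotient of two vanishing quantities, so what the above argument rigorously yields is $P(\alpha_{k}\not\sim\beta_{l}\text{ and }\widehat{\alpha_{k}}=\widehat{\beta_{l}})\to0$, the conditional phrasing being the intended heuristic reading of this estimate.
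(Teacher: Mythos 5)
Your proposal follows essentially the same route as the paper: the deterministic rigidity statement you isolate is precisely \cite[Theorem 2.8]{it} (with a constant $r(n)$ depending on $n$, not necessarily $1$), whose proof indeed rests on Birman--Menasco's Markov Theorem Without Stabilization exactly as you sketch, and the probabilistic conclusion then follows from the unboundedness of $\mu$ with respect to the FDTC via \cite[Corollary 0.2]{mal} as you describe. The paper simply cites that rigidity theorem rather than reproving it, so your argument is correct and matches the intended proof.
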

\begin{proof}
This follows from \cite[Theorem 2.8]{it}, based on a deep result of Birman-Menasco \cite{bm}: There is a constant $r(n)$ such that for $n$-braids $\alpha,\beta$ with $|\text{FDTC}|>r(n)$ the closures of $\alpha$ and $\beta$ are the same if and only if they are conjugate.
\end{proof}

Note that this also says that the closures of two random braids are \emph{transverse} isotopic if they are \emph{topologically} isotopic. Thus, a random closed braid model of random transverse links are the same as a random closed braid model of random topological links.

Next we address a question concerning the transient properties. For $g\in \Z_{\geq 0}$, let $S(n,g)$ be the subset of the braid group $B_{n}$ consisting of a braid whose closure represents a link of genus $\leq g$. The following result was conjectured in \cite{mal}.

\begin{theorem}
\label{cor:4}
$S(n,g)$ is transient for the random walk $\{g_k\}$ on $B_{n}$. 
\end{theorem}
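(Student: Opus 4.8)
The plan is to convert the topological hypothesis defining $S(n,g)$ into a bound on the Fractional Dehn Twist Coefficient and then feed this into the escape property (2) of the walk. For the standard disk open book of $S^{3}$ we have $S=D^{2}$ and $\phi=\mathrm{id}$, so a braid $\beta\in B_{n}$, regarded as an element of $MCG(D^{2}-\{n\text{ points}\})$, satisfies $\FDTC(\widehat{\beta})=\FDTC(\beta)$. The genus estimate of \cite{ik} quoted in the previous theorem says that $|\FDTC(\beta)|$ is a lower bound for $g(\widehat{\beta})$; hence there is a constant $C=C(n,g)$ with
\[ S(n,g)\subseteq L_{C}:=\{\beta\in B_{n} : |\FDTC(\beta)|\leq C\}. \]
So it is enough to show that the FDTC-sublevel set $L_{C}$ is transient for $\{g_{k}\}$.

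Since $\mu$ is non-elementary and unbounded with respect to FDTC, property (2) gives $P(g_{k}\in L_{C})=P(|\FDTC(g_{k})|\leq C)\to 0$ as $k\to\infty$, which already yields the in-probability form of the assertion. To obtain the stronger statement that the walk meets $S(n,g)$ only finitely often almost surely, the natural route is to control the process $k\mapsto\FDTC(g_{k})$: if one can either produce a summable bound $\sum_{k}P(|\FDTC(g_{k})|\leq C)<\infty$ and apply Borel--Cantelli, or show directly that $|\FDTC(g_{k})|\to\infty$ almost surely, then $L_{C}$ (and hence $S(n,g)$) is visited finitely often. Such almost-sure escape would follow from a law of large numbers for the homogeneous quasimorphism FDTC along the walk whenever its drift $\ell=\lim_{k}\FDTC(g_{k})/k$ is nonzero.

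The main obstacle is exactly this last point, which is plausibly why the statement was only a conjecture. The homogeneous quasimorphism FDTC is odd, so for a symmetric measure $\mu$ the law of $\FDTC(g_{k})$ is symmetric about $0$, the drift vanishes, and $\FDTC(g_{k})$ fluctuates on the scale $\sqrt{k}$, returning to $[-C,C]$ infinitely often; thus $L_{C}$ itself need not be transient in the almost-sure sense, and $\sum_{k}P(|\FDTC(g_{k})|\leq C)$ diverges. The genuine content is therefore that the genus condition is strictly stronger than the FDTC bound, and the argument must show $g(\widehat{g_{k}})\to\infty$ almost surely even along the infinitely many times at which $|\FDTC(g_{k})|$ is small. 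I expect the resolution to use the almost-sure convergence of the walk to a boundary lamination together with the positivity of the linear progress of $\{g_{k}\}$ in the curve complex --- which holds for every non-elementary $\mu$, independently of the FDTC drift --- to bound $g(\widehat{g_{k}})$ from below by a quantity that grows with $k$, rather than relying on the quasimorphism estimate alone.
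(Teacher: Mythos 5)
Your proposal does not reach the statement, and you essentially say so yourself: the reduction $S(n,g)\subseteq L_{C}=\{\beta : |\FDTC(\beta)|\leq C\}$ only yields $P(g_{k}\in S(n,g))\to 0$, which is weaker than transience (the walk visiting $S(n,g)$ only finitely often almost surely), and as you correctly observe, for a symmetric measure the process $\FDTC(g_{k})$ has zero drift and returns to $[-C,C]$ infinitely often, so the sublevel set $L_{C}$ is in general \emph{not} transient. Your proposed repair --- invoking linear progress in the curve complex to force $g(\widehat{g_{k}})\to\infty$ almost surely --- is left entirely as a sketch and is not the route the paper takes; as written, the proof is incomplete.

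The missing idea is a containment of $S(n,g)$ in a set that is known to be transient for reasons having nothing to do with the FDTC. The paper uses the following fact from the proof of \cite[Theorem 1.2]{it2}: if $g(\widehat{\beta})\leq g$, then $\beta$ is conjugate to a word in the standard generators containing at most $2g$ letters $\sigma_{1}^{\pm 1}$, and hence $\beta$ is a product of at most $4g$ reducible (in particular, non-pseudo-Anosov) braids. Writing $T_{n}$ for the set of non-pseudo-Anosov $n$-braids, this gives $S(n,g)\subset T_{n}^{4g}$, and \cite[Corollary 0.7]{mal} asserts that any fixed power $T_{n}^{m}$ of the non-pseudo-Anosov set is transient for a non-elementary walk. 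This bypasses the quasimorphism entirely and is robust against the zero-drift phenomenon you identified; the genus hypothesis is exploited through the word-combinatorics of the braid, not through the lower bound $|\FDTC(\beta)|\leq g(\widehat{\beta})$.
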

\begin{proof}
In the proof of \cite[Theorem 1.2]{it2}, it is shown that for $\beta \in B_{n}$, if $g(\widehat{\beta}) \leq g$ then $\beta$ is conjugate to a braid represented by a word $W$ over the standard generator $\{\sigma_{1}^{\pm 1},\ldots, \sigma_{n-1}^{\pm1}\}$ such that the number of $\sigma_{1}^{\pm 1}$ in $W$ is at most $2g$. This shows that such a braid $\beta$ is written as a product of at most $4g$ reducible braids. Let $T_n \subset B_{n}$ be the set of all non pseudo-Anosov $n$-braids. Then $S(n,g) \subset T_{n}^{4g}$. 
By \cite[Corollary 0.7]{mal}, $T_{n}^{4g}$ is transient for the random walk $\{g_k \}$ hence so is $S(n,g)$.
\end{proof}

Finally we point out another application of quasi-morphism arguments.
A knot concordance invariant $c$ having certain properties, like signature $\sigma$, Rasmussen's invariant $s$ \cite{ra}, the $\tau$ invariant of knot Floer homology \cite{os0}, gives rise to a quasi-morphism $\rho_c:B_{n} \rightarrow \Z$, by defining $\rho_c(\alpha) = c(\widehat{\alpha})$ \cite{bra}.

\begin{theorem}
\label{theorem:5}
{$ $}
\begin{enumerate}
\item If $\mu$ is unbounded with respect to $\rho_{\sigma}$ or $\rho_{s}$, then  the probability that $\widehat{\alpha_{k}}$ is not slice (more strongly, for a fixed constant $C$, the probability that the slice genus  $g_{4}(\widehat{\alpha_k})$ is greater than $C$), goes to one as $k \to \infty$. 
\item If $\mu$ is unbounded with respect to $\rho_{s}-\rho_{\sigma}$, the probability that $\widehat{\alpha_k}$ is non-alternating goes to one as $k \to \infty$.
\end{enumerate} 
\end{theorem}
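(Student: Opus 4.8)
The plan is to derive both statements directly from the random walk property (2) above (Malyutin's \cite[Corollary 0.2]{mal}), applied not to the FDTC but to the concordance quasimorphisms $\rho_\sigma$, $\rho_s$, and $\rho_s-\rho_\sigma$, with two classical inputs fed in: the slice-genus lower bounds coming from the signature and the Rasmussen invariant, and Rasmussen's computation of $s$ on alternating knots. The point is that property (2) converts ``$\mu$ unbounded with respect to $\rho$'' into ``$|\rho(\alpha_k)|$ is large with asymptotic probability one,'' so all the work is in translating largeness (or non-vanishing) of the relevant quasimorphism into the geometric conclusion.

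For (1), I would first recall the standard slice-genus inequalities $|\sigma(K)|\le 2g_4(K)$ (classical) and $|s(K)|\le 2g_4(K)$ \cite{ra}, valid for every knot $K$. Treating the signature case (the $\rho_s$ case being identical), unboundedness of $\mu$ with respect to $\rho_\sigma$ lets me invoke property (2) with the constant $2C$: the probability that $|\rho_\sigma(\alpha_k)|=|\sigma(\widehat{\alpha_k})|\le 2C$ tends to $0$, hence the probability that $|\sigma(\widehat{\alpha_k})|>2C$ tends to $1$. Combining with $g_4(\widehat{\alpha_k})\ge \tfrac12|\sigma(\widehat{\alpha_k})|$ gives $g_4(\widehat{\alpha_k})>C$ with asymptotic probability one; specializing to $C=0$ and recalling that slice knots have $g_4=0$ yields the non-sliceness statement.

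For (2), the key input is Rasmussen's theorem that $s$ and $\sigma$ coincide up to a universal sign on alternating knots, so that—in the signature convention for which $\rho_s-\rho_\sigma$ is the intended combination—the concordance invariant $s-\sigma$ vanishes identically on alternating knots; equivalently, $(\rho_s-\rho_\sigma)(\alpha)=0$ whenever $\widehat{\alpha}$ is alternating. Thus the event that $\widehat{\alpha_k}$ is alternating is contained in the event $(\rho_s-\rho_\sigma)(\alpha_k)=0$. I then apply property (2) to the quasimorphism $\rho_s-\rho_\sigma$ with the constant $C=0$: unboundedness forces the probability of $|(\rho_s-\rho_\sigma)(\alpha_k)|\le 0$, i.e. of $(\rho_s-\rho_\sigma)(\alpha_k)=0$, to tend to $0$, whence $P(\widehat{\alpha_k}\text{ alternating})\to 0$ and the non-alternating probability tends to one.

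The arguments are short because the analytic content is carried entirely by Malyutin's result; the remaining points are bookkeeping rather than genuine obstacles. First, one checks $\rho_{s-\sigma}=\rho_s-\rho_\sigma$ on $B_n$, which is immediate from $\rho_c(\alpha)=c(\widehat{\alpha})$ and linearity of $c\mapsto c(\widehat{\alpha})$. Second, and most delicate, one must pin down the signature sign convention so that Rasmussen's identity on alternating knots reads $s-\sigma\equiv 0$ rather than $s+\sigma\equiv 0$; this is the single place where a wrong convention would break part (2). Finally, the subtlest logical step is the use of the $C=0$ case of property (2) in part (2): alternating closures do not merely make $\rho_s-\rho_\sigma$ small, they make it exactly zero, so it is essential that Malyutin's statement is permitted to hold for $C=0$ and not only for $C>0$.
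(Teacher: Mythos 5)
Your proposal is correct and follows essentially the same route as the paper: part (1) via the inequalities $|\sigma(\widehat{\alpha})|,|s(\widehat{\alpha})|\le 2g_4(\widehat{\alpha})$ combined with Malyutin's unboundedness result, and part (2) via the vanishing of $\rho_s-\rho_\sigma$ on braids with alternating closures (Rasmussen's theorem). Your two cautionary points are both already handled in the paper's setup: property (2) is stated for all $C\ge 0$, and the paper's sign convention makes $\sigma=s$ on alternating knots.
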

In Theorem \ref{theorem:5}, we can use $\tau$ invariant instead of Rasmussen's invariant $s$.
\begin{proof}
The first assertion follows from the fact that $|\sigma(\widehat{\alpha})|,|s(\widehat{\alpha})| \leq 2g_{4}(\widehat{\alpha})$. The second assertion follows from the fact that $\sigma(\widehat{\alpha})=s(\widehat{\alpha})$ if $\widehat{\alpha}$ is alternating \cite[Theorem 3]{ra}.
\end{proof}

\begin{remark*}

The assumption that $\mu$ is unbounded with respect to $\phi_{s}-\phi_{\sigma}$ is satisfied, for example, if $n>2$ and some powers of $\Delta^{4} =(\sigma_{1}\cdots\sigma_{n-1})^{2n}$ belongs to $H_{\mu}$. This follows from the following facts. 
\begin{itemize}
\item $|\sigma(\widehat{\alpha \Delta^{4}})| \leq |\sigma(\widehat{\alpha})| + n^{2} - 1$ \cite[Lemma 4.1]{gg}.
\item If $\alpha$ is a positive braid, $|s(\widehat{\alpha \Delta^{4}})| = |s(\widehat{\alpha})| + 2n(n-1)$, because the closure $\widehat{\alpha}$ of a positive $n$-braid satisfies $2g_{4}(\widehat{\alpha})= |s(\widehat{\alpha})| = e(\alpha)-n+1$ where $e(\alpha)$ is the exponent sum of the braid $\alpha$ \cite[Theorem 4]{ra}.
\item For any $\beta \in B_{n}$, $\beta\Delta^{4N}$ is a positive braid for sufficiently large $N>0$.
\end{itemize}

\end{remark*}

\end{document}